\documentclass[12pt,reqno]{amsart}
\usepackage{amsfonts,amsmath,amscd,amssymb,amsthm,color,graphicx}
\usepackage{enumerate,hyperref,perpage,url}
\usepackage[margin=2.2cm, a4paper]{geometry}

\theoremstyle{plain}
\newtheorem{thm}{Theorem}

\newtheorem{prop}[thm]{Proposition}

\theoremstyle{remark}
\newtheorem{defn}[thm]{Definition}

\newtheorem{example}{\textbf{Example}}
\newtheorem{nexample}{\textbf{Non-Example}}

\numberwithin{equation}{section}
\numberwithin{thm}{section}

\MakePerPage{footnote} \allowdisplaybreaks \raggedbottom

\newcommand\N{\mathbb N}

\newcommand\R{\mathbb R}

\newcommand{\Hess}{\text{Hess}}

\title[Stationary phase type estimates for low symbol regularity]{Stationary phase type estimates for low symbol regularity}

\author{Melissa Tacy}
\address{Department of Mathematics and Statistics, University of Otago, Dunedin Otago, New Zealand}
\email{mtacy@maths.otago.ac.nz}



\date{}

\begin{document}
\maketitle

\begin{abstract}
The well-known stationary phase formula gives us a way to find asymptotics of oscillating integrals so long as the symbol is regular enough (in comparison to the large parameter controlling the oscillation). However in a number of applications we find ourselves with symbols that are not suitably regular. In this paper we obtain decay bounds for such oscillatory integrals.
\end{abstract}

The stationary phase formula gives us a way to compute integrals of the form
$$I(\lambda)=\int_{\R^{n}} e^{i\lambda\phi(x)}a(x)dx\quad\lambda\in\R^{+}$$
under the condition that $\phi$ has a single stationary point, denoted $x_{c}$, on the support of $a(x)$ and
$$|\det(\Hess_{\phi})|\geq{}c>0.$$
In particular it gives us the asymptotic sum
\begin{equation}I(\lambda)=e^{i\lambda\phi(x_{c})}\sum_{j=0}^{\infty}\lambda^{-\frac{n-1}{2}-j}b_{j}(x_{c})\label{statphasexp}\end{equation}
where $b_{0}(x)$ is a constant multiple of the symbol $a(x)$, that is $b_{0}(x)=c_{n}a(x)$. The classical theory of these expansions is very well developed. See for example the texts of H\"{o}rmander \cite{H}, Muscalu-Schlag \cite{MS} or Stein \cite{S}. In many applications it is, however, necessary to deal with oscillatory integrals where  the phase functions and symbol have more complicated behaviour.  In \cite{ABZ} Alazard, Burq and Zuily study oscillatory integrals where the phase function and symbol have dependence on external parameters. These type of integrals frequently show up in the analysis of Fourier integral operators associated with the wave and Schr\"{o}dinger equations, (see for example \cite{ABZ2},\cite{BGT},\cite{GHT},\cite{koch},\cite{tacy09}). Many of the phase functions in these applications have form similar to
$$\phi(t,x,y)=t|x|^{2}-\langle y,x\rangle$$
so the Hessian of $\phi$ depends on $t$. For such integrals Alazard, Burq and Zuily find the precise decay estimates in terms of the external parameters. However they do not address the issue of symbol regularity or the case where the phase function may have some dependence on $\lambda$ itself. Cases where the phase function depend on $\lambda$ can arise when considering propagators that have some $\lambda$ dependence themselves and the necessitiy of placing cut off functions to localise results often cause symbols to lose nice regularity problems. In the papers \cite{BGT},\cite{koch}, \cite{tacy09} the authors treat such problems for some specific phase functions and symbols.  In this paper we consider a very general setting
 where the phase function $\phi$ and symbol $a$ are allowed to depend on $\lambda$ itself. In particular we wish to consider cases where there is a regularity loss of a positive power of $\lambda$ for each derivative. The conditions that we place on $a$ and $\phi$ are designed to encompass previous work but also with an eye to the current developments in the field and the technical needs for future work. As the conditions and related notation are rather technical we also (for the readers convenience) record our specific notation in the Appendix at the end of the paper.  For completeness (and because it often arises in applications) we will allow $I(\lambda)$ to  depend on other variables $y$ (which we assume to be in $\R^{d}$, $d$ need not be equal to $n$) in the following fashion
$$I(\lambda,y)=\int e^{i\lambda\phi(\lambda,x,y)}a(\lambda,x,y)dx.$$
We are particularly interested in the cases where for each $(\lambda,y)$ there is a single critical point satisfying 
$$\nabla_{x}\phi(\lambda,x,y)=0.$$
We will commonly denote this point $x(y)$, supressing the dependence on $\lambda$ which is generally mild. We will assume a lower bound on the determinant of the Hessian of $\phi$. That is for each $(\lambda,y)\in\R^{+}\times \R^{n}$ we have a lower bound
$$|\det(\Hess_{\phi})|\geq{}c\mu(\lambda,y)^{n}>0.$$
Cases of particular interest are those where $\mu(\lambda,y)\to 0$ as $\lambda\to\infty$ (the rate may depend on $y$). Throughout this paper we will abuse notation and write $\mu=\mu(\lambda,y)$. The $y$ dependence means that $I(\lambda,y)$ is now a function and we would like to know about its oscillation/regularity properties. If $a(\lambda,x,y)$ is smooth with derivative bounds independent of $\lambda$, that is for any multi-index $\gamma=(\gamma_{1},\dots,\gamma_{n})$ and
$$D_{x}^{\gamma}=\left(\frac{1}{i}\frac{\partial}{\partial x_{1}}\right)^{\gamma_{1}}\cdots \left(\frac{1}{i}\frac{\partial}{\partial x_{n}}\right)^{\gamma_{n}},\quad |\gamma|=\gamma_{1}+\cdots+\gamma_{n}$$
the symbol $a(\lambda,x,y)$ obeys the derivative bounds,
$$|D_{x}^{\gamma}a(\lambda,x,y)|\leq C_{\gamma},$$
then \eqref{statphasexp} extends to the case of $I(\lambda,y)$. Indeed studying the proof of \eqref{statphasexp}  carefully it is not hard to see that (for fixed $\mu=1$) if
$$|D_{x}^{\gamma}a(\lambda,x,y)|\leq{}C_{\gamma}\lambda^{|\gamma|\left(\frac{1}{2}-\epsilon\right)}$$
for some $\epsilon>0$ we can still make sense of the asymptotic sum as $\lambda\to\infty$. It is not difficult to extend \eqref{statphasexp} to these cases. In this note, we are however concerned with the case where the regularity of $a(\lambda,x,y)$ is far worse. For instance where each derivative of $a(\lambda,x,y)$ costs a factor of $\lambda^{\beta}$ for some $\frac{1}{2}\leq\beta\leq 1$,
$$|D_{x}^{\gamma}a(\lambda,x,y)|\approx \lambda^{|\gamma|\beta}\quad\beta\geq 1/2.$$ 
Now the asymptotic sum does not even necessarily converge. However if we study the question from the viewpoint that extreme oscillations of $e^{i\lambda\phi(x,y)}$ cause cancellation we can see that there is good reason to still expect some decay for $I(\lambda,y)$. Focusing only on a one dimensional example (with $\mu=1$) consider
$$I(\lambda)=\int_{\R^{n}} e^{i\lambda x^{2}}a(\lambda,x)dx.$$
At a heuristic level the stationary phase formula tells us that if $a$ is smooth independent of $\lambda$ the main contribution to $I(\lambda)$ comes from near $x=0$. After that the oscillations become rapid enough to force cancellation, see Figure \ref{goodreg}. 
\begin{figure}[h!]
\includegraphics[scale=0.5]{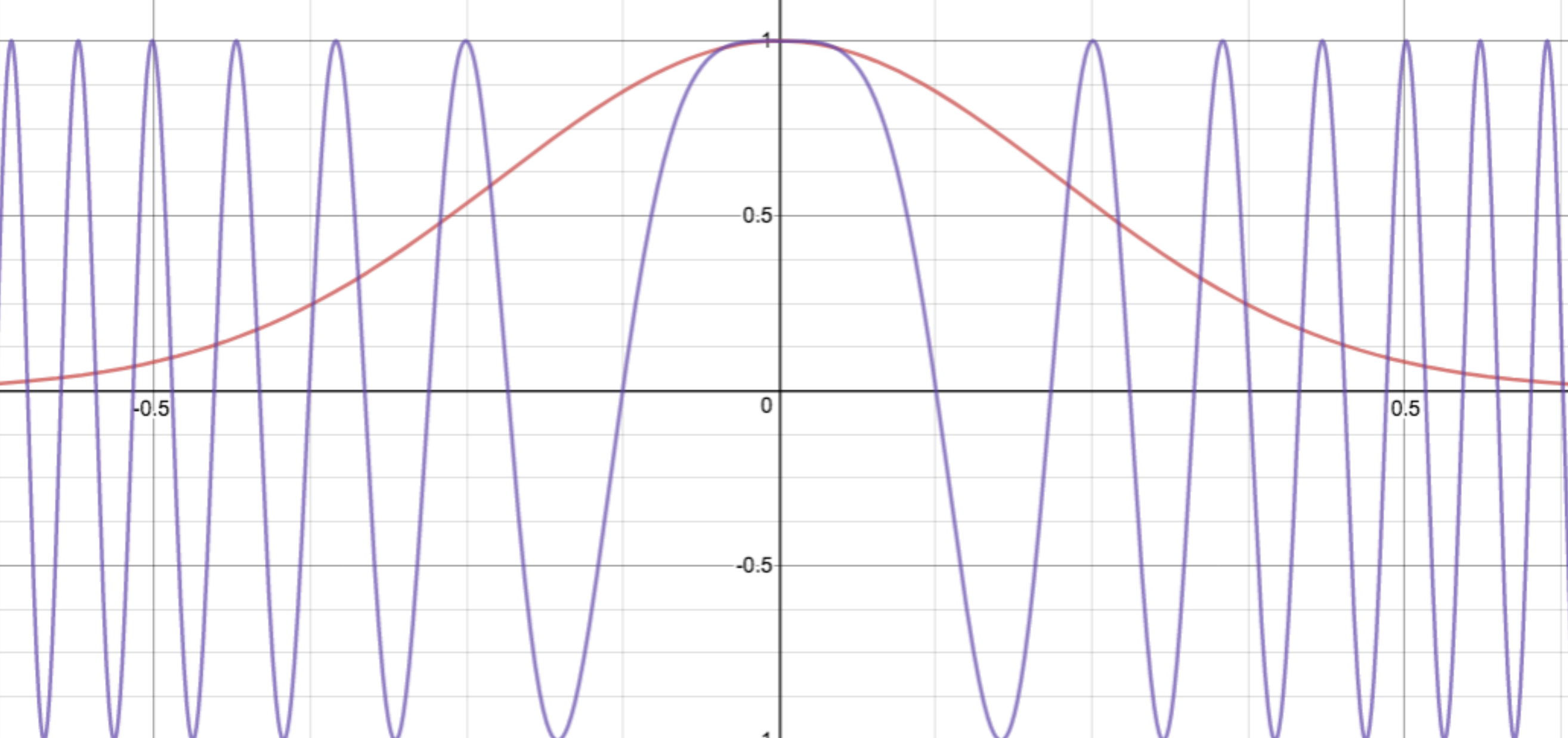}
\caption{The $y$ axes depicts both the real part of $e^{i\lambda x^{2}}$ and a symbol $a(\lambda,x)$ with regularity loss less than $\lambda^{1/2}$ per derivative. In this case the symbol has regularity better than the natural scale of the oscillation}\label{goodreg}\end{figure}

We examine this case from the perspective of the proof of the Van der Corput lemma which is achieved in three steps:
\begin{enumerate}
\item First the region $|x|\leq{}\lambda^{-\alpha}$ $(\alpha\in\R^{+}$ is to be chosen later) is excised and that contribution to the integral estimated simply by considerations of size/measure (no oscillations are used). That is for some $\chi:\R^{+}\to \R$, smooth compactly supported and equal to one on $[-1/2,1/2]$ we write
$$I(\lambda)=I_{\alpha}^{1}(\lambda)+I_{\alpha}^{2}(\lambda)$$
where
\begin{align*}I_{\alpha}^{1}(\lambda)&=\int_{\R^{n}}e^{i\lambda x^{2}}a(\lambda,x)\chi(\lambda^{-\alpha}|x|)dx\\
&\text{and}\\
I_{\alpha}^{2}(\lambda)&=\int_{\R^{n}}e^{i\lambda x^{2}}a(\lambda,x)(1-\chi(\lambda^{-\alpha}|x|))dx.\end{align*}
Then we can estimate $I_{\alpha}^{1}(\lambda)$ by
\begin{align*}
|I_{\alpha}^{1}(\lambda)|&\leq \int_{\R^{n}}|a(\lambda,x)\chi(\lambda^{-\alpha}|x|)|dx\\
&\leq C \lambda^{\alpha n}.\end{align*}
This leaves $I_{\alpha}^{2}(\lambda)$ where the support of the integrand is cut away from the critical point.
\item For $|x|>\lambda^{-\alpha}$ the operator
$$L=\frac{1}{2i\lambda x}\frac{d}{dx}$$
has the property that $Le^{i\lambda\phi}=e^{i\lambda\phi}$. So integrating by parts
\begin{align*}
I_{\alpha}^{2}(\lambda)&=\int_{\R^{n}}L\left[e^{i\lambda x^{2}}\right]a(\lambda,x)(1-\chi(\lambda^{-\alpha}x))dx\\
&=-\int_{\R^{n}}e^{i\lambda x^{2}}L\left[a(\lambda,x)(1-\chi(\lambda^{-\alpha}x))\right]dx.\end{align*}
This integration by parts argument can by repeated as many times as necessary (say N). Each time we integrate by parts we release one power of $\lambda^{-1}$ however we lose powers of $\lambda$ due to derivatives hitting $a(\lambda,x)(1-\chi(\lambda^{-\alpha}|x|))$ and the factors of $\frac{1}{x}$.  Keeping track of that loss we obtain
$$|I_{\lambda}^{2}(\lambda)|\leq C_{N} \lambda^{-N} \lambda^{2\alpha N}\lambda^{\alpha n}.$$
\item The two contributions are compared and $\alpha$ chosen so that they are equal in magnitude. This occurs when $\alpha=1/2$.
\end{enumerate}
Note that any cut off function excising a $\lambda^{-1/2}$ region will have a regularity loss of $\lambda^{1/2}$ per derivative. When restricted to $|x|>\lambda^{-1/2}$ so  does the factor $1/x$. Therefore  the proof of the Van der Corput lemma is unchanged if the symbol has regularity loss up to $\lambda^{1/2}$. 

\begin{figure}[h!]
\includegraphics[scale=0.5]{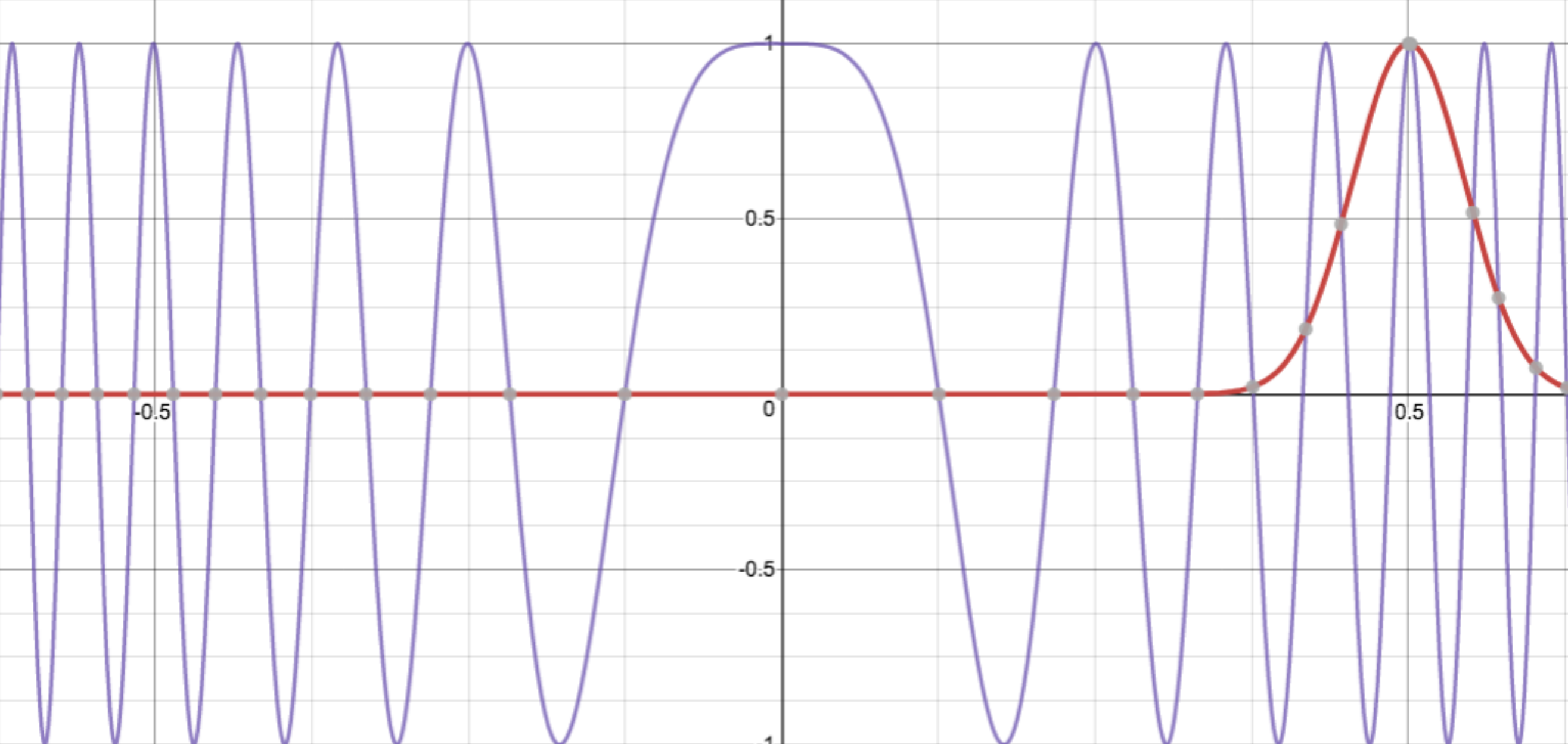}
\caption{The $y$ axes depicts both the real part of $e^{i\lambda x^{2}}$ and a symbol $a(\lambda,x)$ with regularity loss greater than $\lambda^{1/2}$ per derivative. In this case even though the symbol no longer has regularity better than the natural scale, we eventually see cancellation}\label{badreg}\end{figure}

In Figure \ref{badreg} we see an example where the regularity of $a(\lambda,x,y)$ is worse than the $\lambda^{1/2}$ scale. However, far enough from zero we are still able to gain some cancellation. Naturally we cannot expect to get any decay in the case
$$|D^{\gamma}a(\lambda,x,y)|\geq{}C_{\gamma}\lambda^{|\gamma|}$$
as then the oscillations are never rapid enough to induce cancellation. However we are able to obtain results if
$$|D^{\gamma}_{x}a(\lambda,x,y)|\leq{}C_{\gamma}\lambda^{|\gamma|\beta}$$
for $1/2\leq\beta\leq 1$.

We now consider the case $|\det(\Hess_{\phi})|\geq{}\mu^{n}$. First, to fix our ideas, we use the simplest model available to us, where $a(\lambda,x)$ no longer depends on $\lambda$ and so is denoted $a(x)$. Now
$$I(\lambda)= \int e^{i\lambda\mu x^{2}}a(x)dx.$$
If $a(x)$ has no regularity loss we could define a new parameter $\tilde{\lambda}=\mu\lambda$ and apply the Van de Corput lemma for $\tilde{\lambda}$. Therefore the fundamental length scale would become $\lambda^{-1/2}\mu^{-1/2}$. Now consider the cases where there is $\lambda^{\beta}$ loss per derivative (and assume $\beta>1/2$). Then we can obtain decay via integration by parts if
$$\left|\frac{1}{x}\frac{1}{\lambda\mu}\lambda^{\beta}\right|\leq{}1.$$
That is $|x|\geq{}\lambda^{-1+\beta}\mu^{-1}$. So we expect this to be the correct scale to replace $\lambda^{-1/2}$ in a Van de Corput style proof. For the general case we need to assume some control on higher order derivatives of $\phi$. This leads us to Definition \ref{def:ordermu} which locally characterises the type of critical point with which we can deal. Since the conditions are all local we work in a ball of radius $\epsilon$ about the critical point (denoted $B_{\epsilon}(x_{c})$) and allow all constants in inequalities to depend on $\epsilon$ (denoted $\geq_{\epsilon}$ and $\leq_{\epsilon}$). 

\begin{defn}\label{def:ordermu}
We say that on a set $X\subset\R^{n}$ a phase function $\phi$ has ``a single non-degenerate critical point to scale $\mu$'' if on $X$ there is a single point $x_{c}$ where $\nabla_{x}\phi(x_{c})=0$ and
\begin{itemize}
\item For any $\epsilon$ (small but not dependent on $\lambda$), 
$$|\nabla_{x}\phi(x)|\geq_{\epsilon}\mu\quad\text{for }x\in X\setminus B_{\epsilon}(x_c).$$
\item $$|\det(\Hess_{\phi})|_{x=x_{c}}|\geq{}C\mu^{n}.$$
\item For any multi-index $\gamma$,
$$|D_{x}^{\gamma}\phi|\leq C_{\gamma}\mu.$$

\end{itemize}
\end{defn} 

To further motivate Definition \eqref{def:ordermu} let's consider some examples and non-examples 
\begin{example}
The phase function
$$\phi(x,t,y)=t|x|^{2}-\langle y,x\rangle$$
for $x,y\in B_{1}(0)$ and $t\in [\lambda^{-1},1]$. This phase function frequently arises in the study of semiclassical Schr\"{o}dinger operators. It has a critical point
$$(x(t,y))_{j}=\frac{y_{j}}{2t}.$$
When $\left|x-\frac{y}{2t}\right|>\epsilon$, we have that 
$$|\nabla_{x}\phi|=|2tx -y|=2t\left|x-\frac{y}{2t}\right|>2t\epsilon$$
and $\Hess_{\phi}=(2t)^{n}$. Further
$$|D^{\gamma}_{x}\phi|\leq C_{\gamma}t.$$
Indeed once we have gone beyond second order derivatives $D^{\gamma}_{x}\phi=0$. Therefore this phase function satifies the conditions of Definition \ref{def:ordermu}

\end{example}

\begin{example}
The previous example did not have any explicity dependence on $\lambda$. We can weakly peturb it to allow some $\lambda$ dependence for example
$$\phi(\lambda,x,t,y)=t(1+q(\lambda^{-1}))|x|^{2}-\langle y,x\rangle$$
where $q$ is a polynomial. The critical point becomes
$$(x(t,y))_{j}=\frac{y_{j}}{2t(1+q(\lambda^{-1}))}.$$
When $|x-x(t,y)|>\epsilon$ we have
$$|\nabla_{x}\phi|>2t(1+q(\lambda^{-1}))\epsilon>C\epsilon t$$
where $C$ depends on the coefficients of $q$. Note further that the effect of adding $q(\lambda^{-1})$ doesn't trouble the derivative bounds at all. Therefore this functions satifies the conditions of Definition \ref{def:ordermu}.
\end{example}

\begin{nexample}
We cannot however add too great a peturbation to $\phi$. For instance
$$\phi(\lambda,x,t,y)=t|x|^{2}-\langle y,x\rangle +\lambda^{-2}\psi(\lambda x)$$
for $\psi$ smooth satisfies the lower bounds on the Hessian (when $t>\lambda^{-1}$) but does not satisfy the requirements of Defintion \ref{def:ordermu} as the higher order derivatives of $\phi$ can get very large. 
\end{nexample}

\begin{nexample}
If we write $x=(x_{1},x')$ the phase function
$$\phi(\lambda,x,t,y)=t(x_{1}^{3}+|x'|^{2})-\langle y,x\rangle$$
also fails Defintion \ref{def:ordermu} because when $x_{1}=0$ there is no lower bound on $\Hess_{\phi}$. 
\end{nexample}

With the assumptions of Definition \ref{def:ordermu} were are able to obtain an asymptotic expression for $I(\lambda,y)$ as given by Theorem \ref{thm:main}. 

\begin{thm}\label{thm:main}
Let
$$I(\lambda,y)=\int_{\R^{n}} e^{i\lambda\phi(\lambda,x,y)}a(\lambda,x,y)dx$$
where for each $y$ $\phi$ has a single non-degenerate critical point, denoted $x(y)$, to scale $\mu\geq\lambda^{-1+\beta}$ on the support (in $x$) of $a(\lambda,x,y)$. Further assume the constants from Definition \ref{def:ordermu} are uniform in $y$. Suppose the symbol $a(\lambda,x,y)$ obeys the regularity bounds
$$|D^{\gamma}_{x}a(\lambda,x,y)|\leq{}C_{\gamma}\lambda^{\beta|\gamma|}\quad\frac{1}{2}\leq\beta<1, \gamma\text{ any multi-index}.$$
Then
$$I(\lambda,y)=e^{i\lambda\phi(\lambda,x(y),y)}b(\lambda,y)$$
where
$$|D^{\gamma}_{y}b(\lambda,y)|\leq{}\mu^{-n}\lambda^{-n(1-\beta)}\lambda^{\beta|\gamma|}\sup_{|\sigma|\leq{}|\gamma|}\left|D^{\sigma}_{y}x(y)\right|^{|\sigma|},\quad\gamma\text{ any multi-index} .$$
\end{thm}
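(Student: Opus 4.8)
The plan is to run, in $n$ dimensions and with the full $\lambda$-dependence in place, the three-step Van der Corput argument outlined before Definition~\ref{def:ordermu}, and then to differentiate the outcome in $y$. First I would absorb the critical value into a prefactor: set $\Phi(\lambda,x,y):=\phi(\lambda,x,y)-\phi(\lambda,x(y),y)$, so that $b(\lambda,y)=\int e^{i\lambda\Phi(\lambda,x,y)}a(\lambda,x,y)\,dx$. Since $\Phi$ has the same $x$-derivatives as $\phi$ it still has a single non-degenerate critical point to scale $\mu$ at $x(y)$; moreover $\Phi(\lambda,x(y),y)\equiv 0$ and, because $\nabla_x\Phi(\lambda,x(y),y)=0$, also $\partial_{y_j}\Phi(\lambda,\cdot\,,y)$ vanishes at $x=x(y)$ — this is the structural fact that will make the $y$-differentiation harmless. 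A Taylor expansion of $\nabla_x\Phi$ about $x(y)$, using that $\Hess_\Phi$ at $x(y)$ has determinant $\gtrsim\mu^{n}$ and entries $O(\mu)$ (hence all its singular values are $\approx\mu$) and that the third derivatives are $O(\mu)$, gives $|\nabla_x\Phi|\gtrsim_\epsilon\mu|x-x(y)|$ on a fixed ball $B_\epsilon(x(y))$; combined with the first bullet of Definition~\ref{def:ordermu} for that same $\epsilon$ and compactness of the $x$-support of $a$, this yields $|\nabla_x\Phi(\lambda,x,y)|\gtrsim\mu|x-x(y)|$ throughout. I then set $\ell:=\lambda^{\beta-1}\mu^{-1}$; the hypotheses $\mu\ge\lambda^{\beta-1}$ and $\tfrac12\le\beta<1$ (with $\mu\lesssim 1$, as in all applications) are exactly what give the three facts I use repeatedly: $\ell\le 1$, $\ell^{-1}\le\lambda^\beta$, and $\lambda\mu\ell=\lambda^\beta$.

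For the case $\gamma=0$ I would split $b=b^1+b^2$ with the cut-off $\chi(\ell^{-1}(x-x(y)))$. On $|x-x(y)|\lesssim\ell$ only size is used: $|b^1|\le\int|a\chi|\lesssim\ell^{n}=\mu^{-n}\lambda^{-n(1-\beta)}$, already the claimed bound on $b$. For $b^2$ I decompose $\{|x-x(y)|>\ell/2\}$ into dyadic shells $S_k=\{|x-x(y)|\sim r\}$, $r=2^k\ell\lesssim 1$, with cut-offs whose $x$-derivatives cost $r^{-|\sigma|}\le\lambda^{\beta|\sigma|}$, and on each shell I integrate by parts $N$ times with the transpose of $L=(i\lambda|\nabla_x\Phi|^{2})^{-1}\langle\nabla_x\Phi,\nabla_x\rangle$. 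The point is that one application of $L^t$ improves the amplitude by a factor $\lesssim\lambda^\beta/(\lambda|\nabla_x\Phi|)\lesssim\lambda^\beta/(\lambda\mu r)=\ell/r=2^{-k}$ while preserving the $\lambda^\beta$-per-derivative rate: this uses $|D_x^\sigma(\nabla_x\Phi/|\nabla_x\Phi|^{2})|\lesssim(\mu r)^{-1}r^{-|\sigma|}$ together with the crucial fact $r^{-1}\le\ell^{-1}\le\lambda^\beta$, so that no $x$-derivative that appears — on the weight, on the cut-offs, or on $a$ — ever beats $\lambda^\beta$. Iterating $N$ times, the amplitude on $S_k$ is $\lesssim_N 2^{-kN}$, hence $|b^2|\le\sum_k|S_k|\,2^{-kN}\lesssim\sum_k(2^k\ell)^{n}2^{-kN}\lesssim\ell^{n}$ for any fixed $N>n$. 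This proves $|b(\lambda,y)|\lesssim\mu^{-n}\lambda^{-n(1-\beta)}$.

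For the $y$-derivatives I would induct on $|\gamma|$, showing $D_y^\gamma b=\int e^{i\lambda\Phi}c_\gamma\,dx$ with $|D_x^\sigma c_\gamma|\lesssim\lambda^{\beta(|\gamma|+|\sigma|)}(r/\ell)^{|\gamma|}\sup_{|\tau|\le|\gamma|}|D_y^\tau x(y)|^{|\tau|}$ on $S_k$. The recursion $c_{\gamma+e_j}=(i\lambda\,\partial_{y_j}\Phi)\,c_\gamma+\partial_{y_j}c_\gamma$ is clean precisely because the dangerous part of $\partial_{y_j}I$ — the factor $i\lambda\,\partial_{y_j}\phi(\lambda,x(y),y)$ — has been peeled off into the prefactor $e^{i\lambda\phi(\lambda,x(y),y)}$, so that what remains, $i\lambda\,\partial_{y_j}\Phi$, vanishes at $x=x(y)$ and is $\lesssim\lambda\mu|x-x(y)|\lesssim\lambda^\beta(r/\ell)$ on $S_k$, with $x$-derivative rate still $\lesssim\lambda^\beta$ (this adds one power each of $\lambda^\beta$ and $r/\ell$). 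Differentiating $c_\gamma$ in $y$ instead hits the symbol $a$ (cost $\lambda^\beta$, by the corresponding $\lambda^\beta$-regularity of $a$ in $y$) or the critical-point map $x(y)$ inside the already accumulated factors (cost $\lambda^\beta|D_y^\tau x|$ via Fa\`a di Bruno), reproducing the claimed bound with $|\gamma|$ raised by one. I then re-run the $\gamma=0$ argument on $\int e^{i\lambda\Phi}c_\gamma$: the inner ball contributes $\lesssim\ell^{n}\lambda^{\beta|\gamma|}\sup|D_y^\tau x|^{|\tau|}$, while on $S_k$ the $N$-fold integration by parts beats $c_\gamma$ by $2^{-kN}$, so the shell contributes $\lesssim(2^k\ell)^{n}2^{-kN}2^{k|\gamma|}\lambda^{\beta|\gamma|}\sup|D_y^\tau x|^{|\tau|}$, which sums over $k$ once $N>n+|\gamma|$ to give $|D_y^\gamma b|\lesssim\mu^{-n}\lambda^{-n(1-\beta)}\lambda^{\beta|\gamma|}\sup_{|\tau|\le|\gamma|}|D_y^\tau x(y)|^{|\tau|}$.

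I expect the main obstacle to be the bookkeeping in the iterated integration by parts: one must verify that \emph{every} $x$-derivative generated — on the amplitude $\nabla_x\Phi/|\nabla_x\Phi|^{2}$, on the cut-offs excising the $\ell$-ball and separating the shells, and on $a$ itself — is of rate at most $\lambda^\beta$, so that each integration by parts genuinely yields the geometric factor $\ell/r$ rather than a loss; this is the only place the hypotheses $\beta\ge\tfrac12$ and $\mu\ge\lambda^{\beta-1}$ enter, and they enter exactly through $\ell^{-1}\le\lambda^\beta$ and $\lambda\mu\ell=\lambda^\beta$. A lesser point to dispatch is the (mild) $y$-dependence of $\mu$, hence of $\ell$ and of the cut-offs: differentiating these produces $D_y\mu$ factors, which I would absorb either by assuming $\mu$ has controlled $y$-derivatives or by freezing $\mu$ at a convenient comparable constant over the $y$-range in question.
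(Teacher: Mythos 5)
Your proposal follows essentially the same route as the paper: peel off the critical-value prefactor so that $\tilde{\phi}$ and its $y$-derivatives vanish at $x(y)$, split at the scale $\ell=\mu^{-1}\lambda^{\beta-1}$ (bounding the inner ball by measure and the exterior by iterated integration by parts using $|\nabla_x\tilde{\phi}|\gtrsim\mu|x-x(y)|$), and track how each application of $D_y$ costs at worst a factor of $\lambda^{\beta}$ together with powers of $D_y^{\sigma}x(y)$. The differences you introduce — dyadic shells rather than a single far-field integration by parts (the content of Proposition~\ref{prop:decay}), the full-gradient vector field $L$ rather than a partition of unity isolating one $\partial_{x_1}$, and bounding $i\lambda\partial_{y_j}\tilde{\phi}$ directly on each shell using $|\partial_{y_j}\tilde{\phi}|\lesssim\mu|x-x(y)|$ rather than trading it via \eqref{xyconnect} for $[\partial_{y_j}x(y)]^{T}\partial_x a$ by an extra integration by parts in $x$ — are organizational and do not change the argument or the hypotheses used.
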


\section{Proof of Theorem \ref{thm:main}}

To prove Theorem \ref{thm:main} we first note that if $\tilde{\phi}(\lambda,x,y)=\phi(\lambda,x,y)-\phi(\lambda,x(y),y)$ then by expanding $\tilde{\phi}(\lambda,x,y)$ about $x(y)$ we have that
\begin{equation}\tilde{\phi}(\lambda,x,y)=[x-x(y)]^{T}\left[\Hess_{\phi}\big|_{x(y)}\right][x-x(y)]+O(\mu|x-x(y)|^{3}).\label{localform}\end{equation}
Note that we could use the Morse Lemma to obtain the nicer canonical form
$$\phi(\lambda,x,y)-\phi(\lambda,x(y),y)=\sum_{i=1}^{k}(x_{i}-x_{i}(y))^{2}-\sum_{i=k+1}^{n}(x_{i}-x_{i}(y))^{2}.$$
However this would only be valid for some $\mu$ dependent (but unspecified) region of $x(y)$. Since we want to allow the parameter dependence and we want $\mu$ to be able to depend on $\lambda$ we prefer to use \eqref{localform}. We now prove decay bounds for integrals
$$I(\lambda,y)=\int_{\R^{n}} e^{i\lambda\tilde{\phi}(\lambda,x,y)}|x-x(y)|^{2\gamma}d(\lambda,x,y)dx\quad\gamma\geq{}1$$
where $d(\lambda,x,y)$ is supported away from the critical point at $x(y)$ but otherwise has the same properties as $a(\lambda,x,y)$.

\begin{prop}\label{prop:decay}
Suppose 
$$I(\lambda,y)=\int_{\R^{n}} e^{i\lambda\tilde{\phi}(\lambda,x,y)}|x-x(y)|^{2\gamma}\chi(\mu\lambda^{\alpha}|x-x(y)|)d(\lambda,x,y)dx\quad\gamma\geq{}1,\text{ and }\alpha\leq{}1-\beta$$
where $\phi(\lambda,x,y)$ and $d(\lambda,x,y)$ have the same properties as the phase function/symbol in Theorem \ref{thm:main} and $\chi(r)$ is smooth with
$$\chi(r)=\begin{cases}
1& |r|\leq{}1\\
0& |r|\leq{}2.\end{cases}$$ 
Then
$$|I(\lambda,y)|\leq C_{N} (\mu^{-1}\lambda^{-\alpha})^{(n+2\gamma)}\lambda^{-N(1-\beta-\alpha)}$$
for any natural number $N$.
\end{prop}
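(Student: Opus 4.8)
The plan is to prove Proposition~\ref{prop:decay} by repeated integration by parts (a non‑stationary phase argument), the only real point beyond the classical statement being that the natural length scale here is $\mu^{-1}\lambda^{-\alpha}$ rather than $\lambda^{-1/2}$, and that one must check the regularity cost $\lambda^{\beta}$ of $d$ is still cheap compared with the oscillation of $e^{i\lambda\tilde{\phi}}$ on that scale.

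First I would pin down the geometry near $x(y)$. Differentiating the local expansion~\eqref{localform} gives $\nabla_{x}\tilde{\phi}(\lambda,x,y)=2\,\Hess_{\phi}|_{x(y)}[x-x(y)]+O(\mu|x-x(y)|^{2})$, and from $|\det(\Hess_{\phi})|_{x(y)}|\geq C\mu^{n}$ together with $|D_{x}^{\gamma}\phi|\leq C_{\gamma}\mu$ (so $\|\Hess_{\phi}|_{x(y)}\|\leq C\mu$) every eigenvalue of $\Hess_{\phi}|_{x(y)}$ has modulus comparable to $\mu$; hence on a fixed ($\lambda$‑independent) small ball about $x(y)$ the linear term dominates and
$$|\nabla_{x}\tilde{\phi}(\lambda,x,y)|\geq c\,\mu\,|x-x(y)|.$$
We may assume the whole integral is supported in such a ball. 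Since $d$ is supported away from $x(y)$ while $\chi(\mu\lambda^{\alpha}|x-x(y)|)$ confines $|x-x(y)|\leq 2\mu^{-1}\lambda^{-\alpha}$, after a dyadic decomposition in $|x-x(y)|$ it suffices to bound the contribution of a single shell $|x-x(y)|\sim 2^{-j}\mu^{-1}\lambda^{-\alpha}$ with $j\geq 0$; the measure of the $j$‑th shell and the factor $|x-x(y)|^{2\gamma}$ each carry $2^{-j(n+2\gamma)}$, so the sum over $j$ is a convergent geometric series (this is where $\gamma\geq 1$ enters) and it is enough to treat the outermost shell $|x-x(y)|\sim R:=\mu^{-1}\lambda^{-\alpha}$, the inner shells being handled in the same way or, once integration by parts ceases to gain, by the trivial size bound. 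On the outer shell $|\nabla_{x}\tilde{\phi}|\sim\mu R=\lambda^{-\alpha}$.

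Next I would set $L=(i\lambda|\nabla_{x}\tilde{\phi}|^{2})^{-1}\nabla_{x}\tilde{\phi}\cdot\nabla_{x}$, so $Le^{i\lambda\tilde{\phi}}=e^{i\lambda\tilde{\phi}}$, and write $I(\lambda,y)=\int e^{i\lambda\tilde{\phi}}(L^{t})^{N}F\,dx$ with $F=|x-x(y)|^{2\gamma}\chi(\mu\lambda^{\alpha}|x-x(y)|)d(\lambda,x,y)$. On the outer shell the amplitude $|\nabla_{x}\tilde{\phi}|^{-2}\nabla_{x}\tilde{\phi}$ has size $(\mu R)^{-1}=\lambda^{\alpha}$ and its derivative of order $k$ has size $\lesssim\lambda^{\alpha}R^{-k}$ (using $|D_{x}^{\gamma}\tilde{\phi}|\lesssim\mu$ for all $\gamma$ and the gradient lower bound); a derivative on $|x-x(y)|^{2\gamma}$ or on $\chi(\mu\lambda^{\alpha}|x-x(y)|)$ costs at most $R^{-1}=\mu\lambda^{\alpha}\leq\lambda^{\beta}$ (here $\alpha\leq 1-\beta\leq\beta$ and $\mu\leq 1$), and a derivative on $d$ costs $\lambda^{\beta}$. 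Expanding $(L^{t})^{N}F$, each of the $N$ applications of $L^{t}$ contributes one $\lambda^{-1}$, one factor $\lambda^{\alpha}$ from the integration‑by‑parts amplitude, and one derivative costing at most $\lambda^{\beta}$, so that
$$\bigl|(L^{t})^{N}F\bigr|\lesssim\lambda^{-N}\lambda^{\alpha N}\lambda^{\beta N}|x-x(y)|^{2\gamma}=\lambda^{-N(1-\beta-\alpha)}|x-x(y)|^{2\gamma}\lesssim\lambda^{-N(1-\beta-\alpha)}R^{2\gamma}$$
on the shell. Integrating over the shell, whose measure is $\lesssim R^{n}$, yields $|I(\lambda,y)|\lesssim\lambda^{-N(1-\beta-\alpha)}R^{n+2\gamma}=(\mu^{-1}\lambda^{-\alpha})^{n+2\gamma}\lambda^{-N(1-\beta-\alpha)}$, which is the assertion.

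The main obstacle is the bookkeeping for $(L^{t})^{N}F$: one must verify that the three competing costs — $\lambda^{\beta}$ from a derivative falling on $d$, $R^{-1}=\mu\lambda^{\alpha}$ from one falling on $|x-x(y)|^{2\gamma}$ or on the cutoff, and $(\mu R)^{-1}=\lambda^{\alpha}$ for the amplitude of $L$ — always combine to at most $\lambda^{\alpha+\beta-1}$ per integration by parts, and that the regularity bounds on $d$, whose constants are not assumed uniform in the order, propagate through the $N$ steps with a final constant $C_{N}$ depending only on $N$ and on $C_{\gamma}$ for $|\gamma|\leq N$. It is exactly the hypothesis $\alpha\leq 1-\beta$ that makes $\lambda^{\alpha+\beta-1}\leq 1$, so that each integration by parts is non‑increasing — in place of the gain $\lambda^{-1/2}$ per step in the classical smooth case — which is what allows the argument to be iterated an arbitrary number $N$ of times.
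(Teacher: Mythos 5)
Your overall strategy --- repeated integration by parts with an operator $L$ adapted to $\nabla_{x}\tilde{\phi}$, then tracking a cost of roughly $\lambda^{-1+\alpha+\beta}$ per step coming from the three possible places a derivative can land --- is the same as the paper's. The paper uses a single coordinate direction $L=\frac{1}{i\lambda\partial_{x_{1}}\tilde\phi}\partial_{x_{1}}$ chosen by a partition of unity rather than the full gradient operator, and it does not decompose dyadically: it simply applies $L^{N}$ once and integrates the resulting pointwise bound $|x-x(y)|^{-N+2\gamma}\lambda^{\beta N}(\mu\lambda)^{-N}$ over the region $\{|x-x(y)|\geq\mu^{-1}\lambda^{-\alpha}\}$. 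Those are cosmetic differences; your bookkeeping of the three competing derivative costs is correct and matches the paper's.

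The substantive problem is the support of $\chi$. The display defining $\chi$ in the statement contains a typo (both clauses read $|r|\leq$), but the integration domain $\{|x-x(y)|\geq\mu^{-1}\lambda^{-\alpha}\}$ in the paper's proof and the way $\chi$ versus $1-\chi$ are used in the proof of Theorem~\ref{thm:main} make clear that $\chi$ is meant to \emph{excise} the ball of radius $\sim\mu^{-1}\lambda^{-\alpha}$ about $x(y)$: $\chi=0$ for $|r|\leq 1$ and $\chi=1$ for $|r|\geq 2$. You read $\chi$ as a bump confining $|x-x(y)|\leq 2\mu^{-1}\lambda^{-\alpha}$, which reverses the geometry, and this is where a gap appears. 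Your dyadic shells then go \emph{inward}, $|x-x(y)|\sim 2^{-j}R$ with $R=\mu^{-1}\lambda^{-\alpha}$; on shell $j$ one application of $L^{t}$ contributes a factor comparable to $2^{j}\lambda^{-(1-\alpha-\beta)}$, which ceases to be a gain once $2^{j}\gtrsim\lambda^{1-\alpha-\beta}$. From that shell on you fall back on the trivial size bound, whose total is comparable to $R^{n+2\gamma}\lambda^{-(n+2\gamma)(1-\alpha-\beta)}$ --- this \emph{exceeds} the target $R^{n+2\gamma}\lambda^{-N(1-\alpha-\beta)}$ as soon as $N>n+2\gamma$, so the argument as written delivers the conclusion only for $N\leq n+2\gamma$, not for all $N$. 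With the correct orientation (shells going \emph{outward} from $R$, or, as in the paper, no decomposition at all but a single integral over $\{|x-x(y)|\geq R\}$ using the decaying power $|x-x(y)|^{-N+2\gamma}$), the worst shell is the innermost one $j=0$, the sum converges for $N>n+2\gamma$, and smaller $N$ follow since the estimate is monotone in $N$.
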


\begin{proof}
Using the local representation \eqref{localform} for $\tilde{\phi}(x,y)$ we have that
$$\nabla_{x}\tilde{\phi}(\lambda,x,y)=\left[\Hess_{\phi}\big|_{x(y)}\right][x-x(y)]+[x-x(y)]^{T}\left[\Hess_{\phi}\big|_{x(y)}\right]+O(\mu|x-x(y)|^{3}).$$
Therefore by a partition of unity we can assume that there is some $k$ so that
$$\left|\frac{\partial\tilde{\phi}}{\partial x_{k}}\right|\geq{}c\mu|x-x(y)|$$
we may as well assume that $k=1$. Therefore we define the operator
$$L=\frac{1}{i\lambda\partial_{x_{1}}\tilde{\phi}}\frac{\partial}{\partial x_{1}}$$
so that
$$Le^{i\lambda\tilde{\phi}(\lambda,x,y)}=e^{i\lambda\tilde{\phi}(\lambda,x,y)}.$$
We use this to integrate by parts. That is since
$$I(\lambda,y)=\int \left(Le^{i\lambda\tilde{\phi}(\lambda,x,y)}\right)|x-x(y)|^{2\gamma}\chi(\mu\lambda^{\alpha}|x-x(y)|)d(\lambda,x,y)dx,$$
integration by parts yields
$$I(\lambda,y)=-\int e^{i\lambda\tilde{\phi}(\lambda,x,y)}L\Big(|x-x(y)|^{2\gamma}\chi(\mu\lambda^{\alpha}|x-x(y)|)d(\lambda,x,y)\Big)dx.$$
By repeating the argument
$$I(\lambda,y)=(-1)^{N}\int e^{i\lambda\tilde{\phi}(\lambda,x,y)}L^{N}\Big(|x-x(y)|^{2\gamma}\chi(\mu\lambda^{\alpha}|x-x(y)|)d(\lambda,x,y)\Big)dx.$$
Therefore to obtain the decay estimates all we need to do is examine the effect of $N$ operations of $L$ on the symbol. Now
$$L^{N}\Big(|x-x(y)|^{2\gamma}\chi(\mu\lambda^{\alpha}|x-x(y)|)d(\lambda,x,y)\Big)$$
is made up of terms of the form
$$\left(\frac{|x-x(y)|^{2\gamma-2N_{1}}}{\mu^{N_{1}}\lambda^{N_{1}}}\right)\left(\frac{|x-x(y)|^{-N_{2}+2\gamma}\lambda^{\beta N_{2}}}{\mu^{N_{2}}\lambda^{N_{2}}}\right)\left(\frac{|x-x(y)|^{-N_{3}+2\gamma}(\mu^{-1}\lambda^{\alpha})^{N_{3}}}{\mu^{N_{3}}\lambda^{N_{3}}}\right)$$
with $N_1+N_{2}+N_{3}=N$. With the restrictions that $\alpha\leq{}1-\beta$, and  $\mu\geq{}\lambda^{-\beta+1}$ the largest contribution comes from the terms of form
$$\frac{|x-x(y)|^{-N+2\gamma}\lambda^{\beta N}}{\mu^{N}\lambda^{N}}.$$
So
\begin{align*}
|I(\lambda,y)|&\leq{}\int_{|x-x(y)|\geq{}\mu^{-1}\lambda^{-\alpha}}\frac{|x-x(y)|^{-N+2\gamma}\lambda^{\beta N}}{\mu^{N}\lambda^{N}}dx\\
&\leq{}\mu^{-1-2\gamma}\lambda^{-N(1-\beta-\alpha)-\alpha(2\gamma+n)}\\
&=(\mu^{-1}\lambda^{-\alpha})^{(n+2\gamma)}\lambda^{-N(1-\beta-\alpha)}.\end{align*}

\end{proof}

We are now in a position to prove Theorem \ref{thm:main}

\begin{proof}[Proof of Theorem \ref{thm:main}]
We write $I(\lambda,y)$ as
$$I(\lambda,y)=e^{i\lambda\phi(\lambda,x(y),y)}b(\lambda,y)$$
where
$$b(\lambda,y)=\int_{\R^{n}} e^{i\lambda\tilde{\phi}(\lambda,x,y)}a(\lambda,x,y)dx$$
and
$$\tilde{\phi}(\lambda,x,y)=\phi(\lambda,x,y)-\phi(\lambda,x(y),y).$$
Recall that we have the local form \eqref{localform} for $\tilde{\phi}(\lambda,x,y)$ of
$$\tilde{\phi}(\lambda,x,y)=[x-x(y)]^{T}\left[\Hess_{\phi}\big|_{x(y)}\right][x-x(y)]+O(\mu|x-x(y)|^{3}).$$
Note that
\begin{multline}
\frac{\partial\tilde{\phi}}{\partial y_{k}}(\lambda,x,y)=-\Big([\partial_{y_{k}}x(y)]^{T}\left[\Hess_{\phi}\big|_{x(y)}\right][x-x(y)]\\
+[x-x(y)]^{T}\left[\Hess_{\phi}\big|_{x(y)}\right][\partial_{y_{k}}x(y)]\Big)+O(\mu|x-x(y)|^{2}).\label{yderiv}\end{multline}
On the other hand
\begin{equation}\frac{\partial\tilde{\phi}}{\partial x_{k}}=e_{k}^{T}\left[\Hess_{\phi}\big|_{x(y)}\right][x-x(y)]+[x-x(y)]^{T}\left[\Hess_{\phi(y)}\right]e_{k}+O(\mu|x-x(y)|^{2})\label{xderiv}\end{equation}
where $e_{k}$ is the standard basis vector with $1$ in the $k$-th position. 
So
\begin{equation}\frac{\partial\tilde{\phi}}{\partial y_{k}}(\lambda,x,y)=[\partial_{y_{k}}x(y)]^{T}[\partial_{x}\tilde{\phi}]+O(\mu|x-x(y)|^{2}).\label{xyconnect}\end{equation}
Now
$$\frac{\partial b(\lambda,y)}{\partial y_{k}}=\int_{\R^{n}} e^{i\lambda \tilde{\phi}(\lambda,x,y)}\left(i\lambda\frac{\partial \tilde{\phi}(x,y)}{\partial y_{k}}a(\lambda,x,y)+\frac{\partial a(\lambda,x,y)}{\partial y_{k}}\right)dx$$
and so using \eqref{xyconnect} we have that
\begin{align*}
\frac{\partial b(\lambda,y)}{\partial y_{k}}&=\int_{\R^{n}} e^{i\lambda\tilde{\phi}(x,y)}\left(i\lambda[\partial_{y_{k}}x(y)]^{T}[\partial_{x}\tilde{\phi}]a(\lambda,x,y)+\frac{\partial a(\lambda,x,y)}{\partial y_{k}}+\lambda|x-x(y)|^{2}G_{\mu}(x,y)\right)dx\\
&=\int_{\R^{n}} e^{i\lambda\tilde{\phi}(\lambda,x,y)}\left([\partial_{y_{k}}x(y)]^{T}[\partial_{x}a(\lambda,x,y)]+\frac{\partial a(\lambda,x,y)}{\partial y_{k}}+\lambda\mu|x-x(y)|^{2}G(\lambda,x,y)\right)dx\end{align*}
where to get the second line we have integrated by parts in $x$. Therefore the regularity properties of $a(\lambda,x,y)$ tell us that
$$\frac{\partial b(\lambda,y)}{\partial y_{k}}=\int_{\R^{n}} e^{i\lambda\tilde{\phi}(\lambda,x,y)}\left((\lambda \mu)^{\beta}d(\lambda,x,y)+\lambda\mu|x-x(y)|^{2}G(\lambda,x,y)\right)dx$$
where $d(\lambda,x,y)$ has the same regularity properties as $a(\lambda,x,y)$. We can repeat the process to get extra derivatives, each at a cost of one factor of $\lambda^{\beta}$ or $\lambda\mu|x-x(y)|^{2}$. Therefore to obtain an estimate on $|D^{\gamma}_{y}b|$ we need to control terms of the form
$$\lambda^{\gamma_{1}}\mu^{\gamma_{1}}\left|\int_{\R^{n}} e^{i\lambda\tilde{\phi}(\lambda,x,y)}|x-x(y)|^{2\gamma_{1}}\lambda^{\beta \gamma_{2}}d(\lambda,x,y)dx\right|$$
where $\gamma_{1}+\gamma_{2}=\gamma$. We will argue similar to the Van der Corput Lemma and excise a region $|x-x(y)|\leq{}\mu^{-1}\lambda^{-1+\beta}$. From Proposition \ref{prop:decay} we know that
$$\left|\int_{\R^{n}} e^{i\lambda\tilde{\phi}(\lambda,x,y)}|x-x(y)|^{2\gamma_{1}}\lambda^{\beta \gamma_{2}}d(\lambda,x,y)\chi(\mu\lambda^{1-\beta}|x-x(y)|)dx\right|\leq{}(\mu^{-1}\lambda^{-1+\beta})^{(n+2\gamma_{1})}\lambda^{\beta\gamma_{2}}.$$
From the support properties of $\chi$ we also have
$$\left|\int_{\R^{n}} e^{i\lambda\tilde{\phi}(\lambda,x,y)}|x-x(y)|^{2\gamma_{1}}\lambda^{\beta \gamma_{2}}d(\lambda,x,y)(1-\chi(\mu\lambda^{1-\beta}|x-x(y)|))dx\right|\leq{}(\mu^{-1}\lambda^{-1+\beta})^{n+2\gamma_{1}}\lambda^{\beta\gamma_{2}}.$$
So
\begin{align*}
|D^{\gamma}_{y}b|&\leq{}\lambda^{\gamma_{1}}\mu^{\gamma_{1}}\lambda^{\beta\gamma_{2}}(\mu^{-1}\lambda^{-1+\beta})^{n+2\gamma_{1}}\\
&\leq \mu^{-n}\lambda^{-n(1-\beta)}\lambda^{\beta(\gamma_{2}-\gamma_{1})}\lambda^{-2\beta\gamma_{1}}\end{align*}
where in the last line we have use the restriction that $\mu\geq{}\lambda^{-1+\beta}$. Therefore the largest contribution is when $\gamma=\gamma_{2}$ and
$$|D_{y}^{\gamma}b|\leq\mu^{-n}\lambda^{-n(1-\beta)}\lambda^{\beta\gamma}$$
as required. 

\section*{Appendix - technical terminology}

\begin{itemize}
\item[$\gamma$:] A multi-index $\gamma=(\gamma_{1},\dots,\gamma_{n})$, $\gamma_{j}\in\N$.
\item[$|\gamma|$:] The size of the multi-index $|\gamma|=\gamma_{1}+\cdots+\gamma_{n}$. 
\item[$D^{\gamma}_{x}$:] The differential operator
$$D^{\gamma}_{x}=\left(\frac{1}{i}\frac{\partial}{\partial x_{1}}\right)^{\gamma_{1}}\cdots\left(\frac{1}{i}\frac{\partial }{\partial x_{n}}\right)^{\gamma_{n}}$$
\item[$\beta$:] A real number $\frac{1}{2}\leq \beta<1$ that measures the loss of regularity per derivative.
\item[$B_{r}(p)$:] The Euclidean ball of radius $r$ about point $p$
\item[$\geq_{\epsilon},\leq_{\epsilon}$:] Inequalities which allow the consants to depend on $\epsilon$. That is
$$|f(x)|\leq_{\epsilon}\lambda\quad\text{ means } |f(x)|\leq C_{\epsilon}\lambda$$
where the constant $C_{\epsilon}$ may depend on $\epsilon$ (but not $\lambda$).
\item[$\approx$:] Approximately equal to. In particular has the same, up to constants, asymptotic growth rate (in $\lambda$)
\item[$\mu$:] A parameter $\mu>0$ that may depend on $\lambda$ and $y$ which controls the determinant of the Hessian from below in the following fashion
$$|\det(\Hess_{\phi})|>c\mu^{n}.$$
The parameter $\mu$ is allowed to approach zero as $\lambda\to\infty$. 
\item[$x_{c}$:] Notation for the critical point
$$\nabla_{x}\phi(x_{c})=0$$
or
$$\nabla_{x}\phi(\lambda,x_{c})=0$$
in the case where $\phi$ has no $y$ dependence. 
\item[$x(y)$:] Notation for the critical point
$$\nabla_{x}\phi(\lambda,x(y),y)=0$$
for the case where $\phi$ depends on $y$. 

\end{itemize}

\bibliography{references}
\bibliographystyle{plain}

\end{proof}
\end{document}